\documentclass[11pt]{article}

\usepackage{amsmath,amsthm,verbatim,amssymb,amsfonts,amscd, graphicx}
\usepackage[square,sort,comma,numbers]{natbib}
\usepackage{graphicx}
\usepackage{mathrsfs}
\usepackage{enumerate}
\usepackage{mathtools}
\usepackage{lipsum}         
\usepackage{xargs}                      
\usepackage{authblk}
\usepackage{titling}

\topmargin0.0cm
\headheight0.0cm
\headsep0.0cm
\oddsidemargin0.65cm
\textheight23.0cm
\textwidth15.1cm
\footskip1.0cm
\theoremstyle{plain}
\newtheorem{theorem}{Theorem}

\newtheorem{lemma}{Lemma}

\theoremstyle{definition}

\newcommand\blfootnote[1]{%
  \begingroup
  \renewcommand\thefootnote{}\footnote{#1}%
  \addtocounter{footnote}{-1}%
  \endgroup
}

\setlength\parindent{0pt}

\begin{document}

\title{Photo-acoustic tomography in the rotating setting}
\author{Guillaume Bal\thanks{Department of Statistics, University of Chicago,} \hspace{1mm} and
Adrian Kirkeby\thanks{Department of Applied Mathematics and Computer Science, Technical University of Denmark}\hspace{0.5mm} \thanks{Department of Mathematical Sciences, Norwegian University of Technology and Science} }
\thanksmarkseries{arabic}
\maketitle
\blfootnote{E-mail: guillaumebal@uchicago.edu  and   adrki@dtu.dk}
\begin{abstract}
    Photo-acoustic tomography is a coupled-physics (hybrid) medical imaging modality that aims to reconstruct optical parameters in biological tissues from ultrasound measurements. As propagating light gets partially absorbed, the resulting thermal expansion generates minute ultrasonic signals (the photo-acoustic effect) that are measured at the boundary of a domain of interest. Standard inversion procedures first reconstruct the source of radiation by an inverse ultrasound (boundary) problem and second describe the optical parameters from internal information obtained in the first step. 
    
    This paper considers the rotating experimental setting. Light emission and ultrasound measurements are fixed on a rotating gantry, resulting in a rotation-dependent source of ultrasound. The two-step procedure we just mentioned does not apply. Instead, we propose an inversion that directly aims to reconstruct the optical parameters quantitatively. The mapping from the unknown (absorption and diffusion) coefficients to the ultrasound measurement via the unknown ultrasound source is modeled as a composition of a pseudo-differential operator and a Fourier integral operator. We show that for appropriate choices of optical illuminations, the above composition is an elliptic Fourier integral operator. Under the assumption that the coefficients are unknown on a sufficiently small domain, we derive from this a (global) injectivity result (measurements uniquely characterize our coefficients) combined with an optimal stability estimate. The latter is the same as that obtained in the standard (non-rotating experimental) setting.
    
\end{abstract}
\section{Introduction}
\label{intro}

Photo-acoustic tomography (PAT) is a coupled-physics (also known as hybrid) imaging method that aims to reconstruct the optical parameters of biological tissues. The optical parameters are known to provide valuable, high-contrast information about for example cancerous tissue, and hence are of clinical interest \cite{BSFECO-JBO-09,wangPAT,patcancer,bal2010inverse}. To image these parameters, PAT leverages the thermoelastic expansion generated by absorbed light. A domain of interest is illuminated by electromagnetic waves and the absorption of light within the sample causes a rapid expansion of the material proportional to the amount of absorbed photons; this conversion of light to ultrasound is what is known as the \emph{photo-acoustic effect}. The expansion initiates a pressure wave, and this wave is recorded by acoustic detectors at the boundary of the sample. The objective of PAT is to reconstruct the optical parameters from such acoustic measurements.\\
\newline
A model approximating the propagation of light in the diffusive regime is the following: 
\begin{equation}
    \begin{split}
        -\nabla\cdot D \nabla u + \sigma u = 0  \quad \text{in } \Omega, \\
        u = g \quad \text{on } \partial \Omega.
    \end{split}
    \label{diffusion}
\end{equation}
Here $\Omega \in \mathbb{R}^d$ (typically $d=2,3$) is the object domain, $u(x)$ is the light/photon intensity, $D(x)$ is the diffusion coefficient and $\sigma(x)$ is the absorption parameter. The illumination is modelled as the boundary condition $g$. The thermo-elastic expansion is proportional to the amount of absorbed light, and is given by 
\begin{equation}\label{eq:H} H(x) = \mu(x) \sigma(x) u(x),
\end{equation} 
where $\mu(x)$ is a proportionality coefficient known as the Gr\"{u}neisen coefficient.
It is known that the reconstruction of the three parameters $(\sigma, D,\mu)$ is not possible without prior assumptions or multi-color measurements \cite{bal2011multi,BR-IP-12}. In the rest of the paper, we assume $\mu(x)$ known.

To model the acoustic wave resulting from the thermoelastic expansion, we consider the linear wave equation
\begin{equation}
    \begin{split}
        &(\partial_t^2 - c^2(x)\Delta) v(t,x) = 0, \quad \text{in }(0,\infty)\times\mathbb{R}^d,\\
        &v(0,x) = H(x), \quad x \in \Omega,\\
        &\partial_t v(0,x) = 0, \quad x \in \mathbb{R}^d, 
        \end{split}
        \label{wave1}
\end{equation}
where $v(t,x)$ is the acoustic pressure and $c(x)$ is the sound speed, assumed to be known. 
For some domain $\Omega_M$ such that $\Omega \subset \Omega_M$, one measures $v(t,x)$ for $(t,x) \in [0,T] \times \partial \Omega_M$ for some sufficiently large duration $T>0$. We refer to this as the full data. The quantitative reconstruction of $D$ and $\sigma$ is then often considered in two successive steps: 
\begin{enumerate}
    \item The reconstruction of $H(x)$ from $v(t,x)\big|_{[0,T]\times \partial \Omega_M}$. In the case of full data, this is a well-posed problem and several inversion methods exist\cite{timereversal2,kuchment,uhlmann,timereversal}. 
    \item The reconstruction of $D(x)$ and $\sigma(x)$ from $H(x)$ (assuming $\mu(x)$ known). This is quantitative PAT, which is known to be reasonably well-posed \cite{bal2011multi,bal2010inverse,gao2012quantitative,kuchment}. 
\end{enumerate}
The fact that both steps are well-posed is what makes PAT an attractive medical imaging modality. However, the requirements for the decoupling into two separate steps can be hard to meet in an experimental situation. Having access to a full measurement of the acoustic wave requires the illuminated object to be fully surrounded by acoustic measurement devices, a situation which is hard to achieve in most situations of interest. Since Step 1 cannot be stably completed without access to (sufficiently) full data, one instead relies on doing several partial measurements, changing the position and the illumination patterns for each measurement, for instance by rotating both the light source and ultrasound detectors \cite{BSFECO-JBO-09,LNWEOA-SPIE-15}. In \cite{rotmes}, such a situation, with $D$ constant, was considered, and it was shown that the absorption coefficient $\sigma$ can be stably determined in situations where the object (or measurement device) is rotating. In the current paper we investigate the situation were both $D$ and $\sigma$ are unknown, and the measurements and illuminations are rotating. We show that under certain assumptions on the parameters, this situation also allows for stable determination of both $D$ and $\sigma$. 

\medskip

Let us conclude this introductory section by mentioning that PAT is one of the many hybrid (coupled-physics) medical imaging modalities that have emerged in recent years. For a brief list of mathematical description of such methods, we refer the reader to, e.g., \cite{alberti,A-Sp-08,B-IO-12,MR3289684,MY-IP-04,PS-IP-07,WW-W-07} and their multiple references. The rest of the paper is organized as follows. The setting and main uniqueness and stability result are presented in section \ref{sec:main}. The details of the derivation are collected in section \ref{sec:ell} while some concluding remarks on the imaging modality and the mathematical tools are given in section \ref{sec:conclu}.

\section{Background and main results}
\label{sec:main}

\subsection{Sound propagation}
\label{SP}
 The linear acoustic wave equation is a reasonable description of sound propagation \cite{kuchment2011mathematics,uhlmann,rotmes}. The sound speed $c(x)$ is considered known in this paper.
Taking $v$ to be the solution of 
\begin{equation}
    \begin{split}
        &(\partial_t^2 - c^2(x)\Delta) v(t,x) = 0, \quad \text{in }(0,\infty)\times\mathbb{R}^d,\\
        &v(0,x) = f(x), \quad x \in \Omega,\\
        &\partial_t v(0,x) = 0, \quad x \in \mathbb{R}^d, 
        \end{split}
        \label{wave2}
\end{equation}
we define the acoustic measurement operator $\Lambda$ as 
\begin{equation*}
    \Lambda f = v(t,x) \big|_{[0,T]\times \partial \Omega_M}.
\end{equation*}

In the setting considered here, we do not have one, but rather a large number of ultrasound sources $f(x)=f_i(x)$ corresponding for instance to the rotating measurement setting, where a rotating set of light sources generates a different ultrasound source for each rotation. Since the ultrasound detectors also rotate in such a setting, $v(t,x)=\Lambda f$ is available only for $x$ in the support of the rotating detector, which is a small fraction of the boundary $\partial\Omega_M$. As a consequence, we cannot reconstruct the whole $f(x)$ stably from such measurements \cite{alberti}. All that we can reconstruct is the singularities of $f$ that are visible from the available measurements. This mapping from the visible singularities of $f$ to the available measurements is described by a standard microlocal procedure, which we now recall \cite{uhlmann}.

\medskip

Consider the Hamiltonian system:
\begin{equation}
    \begin{split}
    &\dot{X}(t) = \frac{c(X(t))}{2}\xi(t), \\
    &\dot{\xi}(t) = -\frac{\nabla c(X(t))}{2}|\xi(t)|, \\
    &X(0) = x_0, \quad \xi(0) = \xi_0. 
    \end{split}
    \label{hamilton}
\end{equation}
The curves $(X(t),\xi(t))$ are known as bi-characteristic curves, and each $X(t)$ is called a ray. We assume that $c(x) \in C^\infty(\mathbb{R}^d)$ is non-trapping, that is $|X(t)| \to \infty $ as $t \to \infty$, for all rays \cite{kuchment2011mathematics}.   An important feature of the rays is that they are curves along which the propagation of the singularities of a wave occurs, a fact that motivates the preceding assumptions on the measurement geometry, see \cite{uhlmann,rotmes}. To that end, we use the unit speed geodesics $\gamma_{x_0,\xi_0}(t)$, defined by the relation $\dot{\gamma}_{x_0,\xi_0}(t)= \dot{X}(t)/|\dot{X}(t)| $, where $X(t)$ is the solution to \eqref{hamilton} with initial conditions $(x_0,\xi_0)$.   \\
\newline 

We can then describe the measurement operator $\Lambda$ has the following Fourier integral operator(FIO) \cite{FIO,uhlmann}:
\begin{equation}\label{eq:vFIO}
     v(t,x) = \Lambda f (t,x) =(2 \pi)^{-d} \sum_{\tau = \pm}\int  \mathrm{e}^{i\phi_\tau(t,x,\xi)-i\xi \cdot y
    }a_\tau(t,x,\xi)\hat{f}(\xi) \mathrm{d}\xi + Rf(t,x)
\end{equation}
for $(t,x) \in [0,T]\times\partial \Omega_M$, where $R$ is a linear operator with smooth Schwartz kernel.
The phase functions $\phi_\pm$ are solutions to the eikonal equations
$$\mp \partial_t \phi_\pm = c(x)|\nabla_x \phi_\pm |, \quad  \phi_{\pm}(0,x) = x\cdot \xi, $$ and homogeneous of order one in $\xi$. As usual, 
$$\hat{f}(\xi) = \int_{\mathbb{R}^d}\mathrm{e}^{-i\xi\cdot y}f(y) \mathrm{d}y.$$
Note that if $c=\text{const.}$, then $\phi_{\pm} = x\cdot \xi \pm c |\xi|t$, and the solution is exact. The function $a_\pm$ is called a classical amplitude of order 0, and satisfies a recursive transport equation; see (\cite{FIO}, page 128) for details.

We have assumed here that the FIO could be represented with a single (global) phase function $\phi_\tau$. This can always be done for sufficiently short times. For longer times, and to avoid the presence of caustics, the above operator should really be written as a composition of a finite number of such terms, or more generally as a globally defined FIO \cite{FIO,uhlmann}. To simplify notation, we represent our FIO, mapping all the singularities of $f$ (that is $f$ up to a smooth term) to the available measurements by \eqref{eq:vFIO}.

\subsection{Detector model}
\label{sec:dm}
Assume a detector supported on $\Gamma_d$, a closed and bounded hypersurface in $\mathbb{R}^d$. The $i$'th measurement consists of acquiring $v(t,x)$ for $t\in [0,T]$ and $x \in \Gamma_i$, where $\Gamma_i$ is a translation and rotation of $\Gamma_d$.

To obtain enough information on the optical coefficients, we make the following assumption on the measurement setting. We require overlapping measurements, i.e., that $\mu(\Gamma_{i}\cap \left(\cup_{j\neq i} \Gamma_j \right)) > 0$, for all $i$, where $\mu$ is the Lebesgue (surface) measure on $\Gamma_d$. For a collection of $M$ measurements, we set the full measurement surface $\Gamma = \cup_{i =1}^M \Gamma_i$. We then assume we have a domain $\Omega_M$ such that $\Omega \subset \Omega_M$ and $\Gamma = \partial \Omega_M$. In the case of rotating measurements, we obtain that $\Gamma = \partial \Omega_M$ for some ball $ \Omega_M = B_r$ of radius $r$ containing $\Omega$.  \\
\newline 
To describe our measurement setting, we follow the approach  of \cite{uhlmann} used to analyze the inversion of the wave equation with incomplete data. Our requirements on $\Gamma$ and $T$ are the same as those required to get stable inversions with incomplete data. Define 
\begin{equation*}
    \mathcal{G} = \{(t,x) : x \in \Gamma, 0 < t < s(x)\}, 
\end{equation*}
where $s(x)$ is a continuous function determining the temporal measurement interval. We introduce the function $\tau_{\pm}(x,\xi)$, defined as 
$$\tau_{\pm}(x,\xi) = \max\{t \geq 0 : \gamma_{x,\xi}(\pm t) \in \Omega_M \},$$
where $\gamma_{x,\xi}( t)$ are the rays in \eqref{hamilton}. 
In order to make sure all of the singularities of the function $H(x)$ in \eqref{wave1} (and hence of the optical coefficients via \eqref{eq:H}) are captured by the measurement, additional requirements on $\mathcal{G}$ (and hence $\Gamma$ and the measurement interval length $T$) are 
\begin{itemize}
    \item $\forall x \in \Omega, \quad \exists z \in  \Gamma \text{ such that dist}(x,z) < s(z)$. Here dist$(x,z)$ is the length of the geodesic connecting $x$ and $z$, with respect to $\mathrm{d}s^2 = c^{-2}(x)\mathrm{d}x^2$. The conditions says that all of the wave should reach $\Gamma$ during the measurement period. 
    \item $\forall (x,\xi) \in T^*\Omega\backslash 0, \quad (\tau_k(x,\xi), \gamma_{x,\xi}(\tau_k(x,\xi)) \in \mathcal{G},$ for $k=+$ or $k=-$.  \\
    Here $T^*\Omega =\Omega\times \mathbb{R}^d$ is the cotangent bundle of $\Omega$, where we consider $\xi\not=0$ only. The motivation for this condition is that every point in the wavefront set of $H$ should reach the measurement surface.  
\end{itemize} 

The above conditions are satisfied if for example $\Gamma$ is a sphere containing $\Omega$ and an interval such that $s(x) > \max_{x,z \in \bar{\Omega}_M} \text{dist}(x,z) /2$ when the wave speed is constant. These conditions are also satisfied for non-trapping speeds for sufficiently long measurement times \cite{uhlmann}.

\medskip

Last, for the measurement at $\Gamma_i$, let $\{\varphi_j\}_{j=1}^M$ be a partition of unity subordinate to $\{\Gamma_j\}_{j=1}^M$. Note that there is a $K_i \subset \Gamma_i \setminus \left(\cup_{j \neq i}\Gamma_j\right)$ such that $\varphi_i|_{K_i} = 1$, while $\varphi_j|_{K_i} = 0$ for $j \neq i$. Next, we take $\psi \in C^\infty_0([0,2T])$ such that $\psi(t) = 1$ for $t \in [\varepsilon,T]$, with $\varepsilon$ sufficiently small, and set $\chi_i(x,t) = \psi(t)\varphi_i(x) $.  We now define our rotating measurement $V_i$ at $\Gamma_i$ by 
\begin{equation}
    V_i = \chi_i \Lambda f. 
    \label{mes1}
\end{equation}


Note that $f\equiv H=\mu\sigma u$ above should really be read as $f=f_i=H_i=\mu\sigma u_i$, which is rotation dependent as the illumination $g=g_i$ in \eqref{diffusion} rotates along with the detector $\Gamma_i$. There is therefore no hope to reconstruct all sources $f_i$ from measurements of the form \eqref{mes1} (with $f$ replaced by $f_i$) unless $\Gamma_i=\Gamma$. All the sources $f_i\equiv H_i$ have to be anchored to a rotation-independent object, namely the domain of interest modeled by the optical parameters $(D,\sigma)$. This is the objective of the next section.

\subsection{Light propagation and boundary conditions}
\label{LPBC}
The second order elliptic PDE \eqref{diffusion} serves as a reasonable model for propagation of light in in highly scattering media such as biological tissues. 

We now aim to understand how the optical parameters $(D,\sigma)$ influence the ultrasound sources $H=\mu\sigma u$. Since $H\equiv H_i$ corresponds to rotating illuminations $g=g_i$ in \eqref{diffusion}, and rotating ultrasound measurements such as \eqref{mes1} do not allow full reconstructions of each $H_i$, we are also forced to understand such an influence locally, and in fact micro-locally. As in the derivation of \eqref{eq:vFIO}, the resulting pseudo-differential calculus requires enough smoothness for all the required Taylor-type expansions to make sense. We therefore assume that $(D,\sigma)$ are smooth, as we did for the sound speed $c(x)$. A laborious, standard, tracking of all relevant calculations shows that only finitely many terms are necessary in each Taylor expansion. As a consequence, all results hold for $(D,\sigma,c)$ of class $C^k$ for $k$ sufficiently large. We will present all results assuming $k=\infty$ both to simplify and stress that in practice, the difference between large $k$ and yet larger $k$ is somewhat immaterial.


Suppose that the object of interest occupies an open domain $\Omega \in \mathbb{R}^d, d=2,3$, where the boundary $\partial \Omega$ is $C^\infty$. 
Assume $D \in C^\infty(\mathbb{R}^d)$, and such that there is a positive constant $C_D$ and $C_D^{-1} \leq  D \leq C_D $, and that $\text{supp}(D-1) \subset \Omega$. For $ 0 \leq \sigma \in C^\infty_0(\mathbb{R}^d)$, assume there is some closed $\tilde{\Omega}$ such that $\Omega \subset \tilde{\Omega}$, and that $\text{supp}(\sigma)=\tilde{\Omega}$. \\
\newline 
It is well-known in the non-rotating setting that stable reconstruction of both $D$ and $\sigma$ requires internal functionals $H(x)$ from multiple different illuminations, and that appropriate illuminations exist \cite{bal2010inverse,bal2011multi}. Consider such a set of boundary conditions $\{g_j\}_{j=1}^N$. When the object is rotating and the detectors and illumination patterns are fixed, or equivalently, the detector and illumination patterns rotate, as for example in \cite{rotexp2,LNWEOA-SPIE-15}, we get a different set of $N$ illuminations for each rotation $i = 1,2,...M$, i.e., $\{g_{i,j}\}_{j=1}^N$. Now, for a fixed $i$, let $u_{i,j}$ be the solution to 
\begin{equation}
    \begin{split}
        -\nabla\cdot D \nabla u_{i,j} + \sigma u_{i,j} = 0  \quad \text{in } \Omega, \\
        u_{i,j} = g_{i,j} \quad \text{on } \partial \Omega, \quad j = 1,2,...,N. 
    \end{split}
    \label{diffusion2}
\end{equation}
If the $g_{i,j}$ are smooth, we have that $u_{i,j} \in C^\infty(\Omega)$ (\cite{evans10}, Theorem 3, Chapter 6.3). \\
We already mentioned that, regardless of the number of illuminations, one cannot reconstruct both $D$,$\sigma$ and the Gr\"{u}neisen coefficient $\mu$ \cite{bal2011multi}. To simplify notation, we assume $\mu $ known and set
\begin{equation*}
    H_{i,j}(x) = \sigma(x)u_{i,j}(x),   
\end{equation*}
where $u_{i,j}$ is the solution to \eqref{diffusion2}. Indexing the acoustic measurements accordingly, we have that 
\begin{equation}
    V_{i,j} = \chi_i \Lambda H_{i,j}, \quad \text{for } i = 1,2,...,M, \quad j = 1,2,...,N. 
    \label{measurement1}
\end{equation}
This provides a full description of our measurement setting. The parameter $N$ provides the diversity in boundary illuminations that is necessary to stably reconstruct the optical parameters in the second step of standard quantitative PAT \cite{bal2010inverse,bal2011multi}. The parameter $M$ indicates the number of rotations necessary to obtain measurements over all of $\Gamma$, and so roughly corresponds to the ratio between the size (volume of the hypersurface) of $\Gamma$ divided by that of the support of the rotating detector.

\subsection{Main results}
\label{MR}

We can now state the main result of the paper. This is, under some restrictive assumptions, an injectivity and stability result. It does not provide a reconstruction algorithm, merely the reassurance that enough information has been collected to uniquely and stably characterize the unknown optical coefficients. 

We therefore consider two pairs of admissible diffusion and absorption coefficients $(D, \sigma)$ and $(\bar{D},\bar{\sigma})$, satisfying the assumptions from Section \ref{LPBC}. Since the PDO and FIO calculus we use here is not meant to handle boundaries, we have to assume that the values of the coefficients agree in a neighborhood of $\partial \Omega$. For fixed $i,j$, let $u_{i,j}$ and $\bar{u}_{i,j}$ be solutions to \eqref{diffusion2} with illumination $g_{i,j}$ and optical parameters 
$(D, \sigma)$, $(\bar{D},\bar{\sigma})$, respectively. Denote $\delta H_{i,j} = \sigma u_{i,j} - \bar{\sigma} \bar{u}_{i,j}$, $\delta D = D - \bar{D}$ and $\delta \sigma = \sigma -\bar{\sigma}$.
We collect the measurements for each illumination and write their difference as
\begin{equation}
    \delta V_{i,j} = V_{i,j} - \overline{V}_{i,j} = \chi_i  \Lambda \delta H_{i,j}, \quad \text{for } i = 1,2,...,M, \quad j = 1,2,...,N. 
    \label{dvj}
\end{equation}

We use here the fact that the wave propagation step is linear: the measurements are linear in the source terms $H_i$. The full inverse problem, which maps $(D,\sigma)$ to such measurements, is however nonlinear. In spite of this, we will show in the next section that the terms $\delta H_{i,j}$ can be written as a functional that is nonlinear in $(D,\sigma,\bar D,\bar\sigma,u_{i,j},{\bar u}_{i,j})$ but linear in $(\delta D,\delta\sigma)$, essentially as a generalization of the fact that for any polynomial $p(x)$, we can find another polynomial $q(x,y)$ such that $p(x)-p(y)=q(x,y)(x-y)$.

Moreover, we will show in the next section that there exist open sets of illuminations $\{g_{i,j}\}$ such that the mapping from $(\delta D,\delta\sigma)$ to $\{\delta H_{i,j}\}$ may be described as an elliptic pseudo-differential operator (PDO) with a symbol that depends on $(D,\sigma,\bar D,\bar\sigma,u_{i,j},{\bar u}_{i,j})$. The first result along these lines was obtained in the non-rotating setting (with $M\equiv1$) in \cite{kuchment}.

Combining the elliptic FIO in \eqref{eq:vFIO} with the above elliptic PDO shows that the mapping from $(\delta D,\delta\sigma)$ to $\delta V_{j}$ is itself an elliptic FIO with symbol that depends on the $(D,\sigma,\bar D,\bar\sigma,u_{i,j},{\bar u}_{i,j},c(x))$, which must all be sufficiently smooth for the calculus to apply.

Such micro-local results provide optimal stability estimates as well as an injectivity result provided that a smoothing compact operator does not have eigenvalue one. This assumption is notoriously difficult to verify, unless we have recourse to a smallness assumption somewhere. In \cite{rotmes}, where the PAT problem with constant $D$ is considered, the smallness assumption was on the size of the absorption coefficient. Here, we make no assumption on the $O(1)$ size of the absorption and diffusion coefficients. Rather, we assume that the support of the domain where $\delta D = D - \bar{D}$ and $\delta \sigma = \sigma -\bar{\sigma}$ are unknown is itself sufficiently small. This will prove to be a sufficient assumption to obtain the following result. 



\begin{theorem}
\label{stability}
 If $\text{supp}(\sigma-\bar{\sigma})$ and $\text{supp}(D-\bar{D})$ are contained in sufficiently small ball $B_\varepsilon \subset \Omega$, there exist for each rotation $1 \leq i \leq M$ an open set of $2d$ illuminations $\{g_{i,j}\}_{j=1}^{2d}$ such that the following estimate holds for the corresponding measurements.
\begin{equation}
    \|\delta D \|_{L^2(\Omega)} + \|\delta \sigma \|_{H^1(\Omega)} \leq C\left( \sum_{i=1}^M \sum_{j=1}^{2d} \| \delta V_{i,j} \|_{H^1([0,T]\times \partial\Omega_M)}   \right).
    \label{ineq1}
\end{equation}
The constant $C$ depends on $D,\bar{D},\sigma, \bar{\sigma}, c, \Omega$ and the illuminations.  
\end{theorem}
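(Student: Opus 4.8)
The plan is to linearize the forward map, realize it microlocally as the composition of an elliptic pseudo-differential operator (PDO) in the optical perturbations with the elliptic wave Fourier integral operator (FIO) of \eqref{eq:vFIO}, and then turn the resulting microlocal ellipticity into the global estimate \eqref{ineq1} by absorbing a smoothing remainder using the smallness of $B_\varepsilon$. First I would linearize: subtracting the two copies of \eqref{diffusion2} satisfied by $u_{i,j}$ and $\bar u_{i,j}$, the difference $w_{i,j}=u_{i,j}-\bar u_{i,j}$ solves the elliptic problem
\begin{equation}
-\nabla\cdot D\nabla w_{i,j} + \sigma w_{i,j} = \nabla\cdot(\delta D\,\nabla \bar u_{i,j}) - \delta\sigma\,\bar u_{i,j} \ \text{ in }\Omega, \qquad w_{i,j}=0 \ \text{ on }\partial\Omega,
\label{eq:wij}
\end{equation}
whose right-hand side is linear in $(\delta D,\delta\sigma)$; this is the announced identity $p(x)-p(y)=q(x,y)(x-y)$. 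Writing $\delta H_{i,j}=\sigma w_{i,j}+\delta\sigma\,\bar u_{i,j}$ then exhibits $\delta H_{i,j}$ as a functional that is nonlinear in the background quantities but linear in $(\delta D,\delta\sigma)$. Since the coefficients agree near $\partial\Omega$ and $\text{supp}(\delta D),\text{supp}(\delta\sigma)\subset B_\varepsilon\subset\Omega$, the source is compactly supported in the interior, so boundaries play no role and the PDO/FIO calculus applies.

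Next I would pass to symbols. Let $L=-\nabla\cdot D\nabla+\sigma$, an elliptic operator of order $2$ with principal symbol $D|\xi|^2$; solving \eqref{eq:wij} microlocally gives $w_{i,j}=L^{-1}\big(\nabla\cdot(\delta D\nabla\bar u_{i,j})-\delta\sigma\bar u_{i,j}\big)$ modulo a smoothing term. Inserting this into the formula for $\delta H_{i,j}$ and tracking principal parts, the map $(\delta D,\delta\sigma)\mapsto\delta H_{i,j}$ is a PDO whose principal symbol is the row
\begin{equation}
\left( \frac{i\,\sigma\,(\nabla\bar u_{i,j}\cdot\xi)}{D\,|\xi|^2},\ \ \bar u_{i,j} \right),
\label{eq:symb}
\end{equation}
so that $\delta\sigma$ enters at order $0$ while $\delta D$ enters at order $-1$ (through $\nabla\bar u_{i,j}\cdot\xi$, which degenerates along $\xi\perp\nabla\bar u_{i,j}$). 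Collecting the $2d$ illuminations of rotation $i$ yields a $2d\times2$ symbol matrix, and I would invoke the existence of suitable illuminations (as in the non-rotating theory) to choose, in an open set, $2d$ boundary conditions $\{g_{i,j}\}$ for which the gradient matrix with rows $\nabla\bar u_{i,j}^{\,T}$ has full rank $d$ and the value vector $(\bar u_{i,j})_j\in\mathbb{R}^{2d}$ lies outside its $d$-dimensional range. This makes the two columns of \eqref{eq:symb} linearly independent for every $\xi\neq0$, i.e. the system $P_i:(\delta D,\delta\sigma)\mapsto(\delta H_{i,j})_{j=1}^{2d}$ an elliptic Douglis--Nirenberg PDO system; the count $2d$ is what comfortably secures both rank conditions while leaving an open set of admissible illuminations.

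Then I would compose with the wave operator. Under the measurement assumptions of Section \ref{sec:dm}, $\Lambda$ is an elliptic FIO of order $0$ whose canonical relation is governed by the geodesic flow, and the visibility conditions on $\mathcal G$ guarantee that every $(x,\xi)\in T^*\Omega\setminus0$ has a ray reaching $\Gamma=\cup_i\Gamma_i=\partial\Omega_M$. Hence $\chi_i\Lambda P_i$ is an elliptic FIO from $(\delta D,\delta\sigma)$ to $(\delta V_{i,j})_j$, elliptic on the part of $T^*\Omega$ visible from $\Gamma_i$; since each covector is visible from some $\Gamma_i$ and $P_i$ is elliptic there, the family $\{\chi_i\Lambda P_i\}_i$ is jointly elliptic on all of $T^*\Omega\setminus0$. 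A standard FIO parametrix then yields
\begin{equation}
\|\delta D\|_{L^2(\Omega)}+\|\delta\sigma\|_{H^1(\Omega)} \le C\sum_{i=1}^M\sum_{j=1}^{2d}\|\delta V_{i,j}\|_{H^1([0,T]\times\partial\Omega_M)} + \|\mathcal R(\delta D,\delta\sigma)\|,
\label{eq:apriori}
\end{equation}
where the order bookkeeping ($\Lambda$ of order $0$, $\delta\sigma$ at order $0$, $\delta D$ at order $-1$) pins down the $H^1$ data norm together with the $H^1$ control of $\delta\sigma$ and the $L^2$ control of $\delta D$, and $\mathcal R$ is a smoothing, hence compact, operator. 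On functions supported in $B_\varepsilon$ a fixed smoothing operator has operator norm tending to $0$ as $\varepsilon\to0$, so for $B_\varepsilon$ small enough $\|\mathcal R\|<\tfrac12$, the last term in \eqref{eq:apriori} is absorbed into the left-hand side, and \eqref{ineq1} follows together with injectivity.

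I expect two steps to carry the real weight. The first is the ellipticity of $P_i$: because $\delta D$ enters only at order $-1$ and its symbol \eqref{eq:symb} degenerates along $\xi\perp\nabla\bar u_{i,j}$, one must genuinely construct the open set of $2d$ illuminations realizing the rank conditions \emph{uniformly} in $(x,\xi)$ over $B_\varepsilon$, and organize the mixed-order system in the Douglis--Nirenberg sense so the orders match the $L^2/H^1$ split in \eqref{ineq1}. The second is quantifying the $\varepsilon$-dependence of $\|\mathcal R\|$ precisely enough to absorb it; this is where the smallness of $B_\varepsilon$ replaces the usual, notoriously hard-to-verify ``no eigenvalue one'' hypothesis.
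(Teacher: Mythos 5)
Your proposal is correct and follows essentially the same route as the paper: linearize to an elliptic problem for $\delta u_{i,j}$, realize $(\delta D,\delta\sigma)\mapsto\delta H_{i,j}$ as a mixed-order (Douglis--Nirenberg) PDO with the same principal symbols, compose with the wave FIO, obtain joint ellipticity over the rotations from a rank condition on the illuminations (your linear-algebra formulation is equivalent to the paper's requirement that the fields $\nabla\bar u_{i,m}\bar u_{i,n}-\nabla\bar u_{i,n}\bar u_{i,m}$ span $\mathbb{R}^d$, which the paper realizes concretely with CGO solutions in its Theorem \ref{cgo}), and absorb the smoothing remainder using the smallness of the support. The only differences are cosmetic: the paper implements the parametrix step explicitly via the time-reversal operator $A$ and the normal operator $\sum_i\kappa_i^*\kappa_i$, and absorbs the remainder through an $H^{-1}$-versus-$L^2$ Poincar\'e estimate on $B_\varepsilon$ rather than your (equally valid) argument that a fixed smoothing operator has small norm on functions supported in $B_\varepsilon$.
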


Additional remarks on the set of necessary illuminations will be provided in the next section. 
We note that in dimension $d\geq3$, no boundary conditions $g$ guarantee the necessary ellipticity assumptions to obtain the optimal stability estimates given in the above theorem independently of the coefficients $(\sigma,D)$ \cite{ABD-ARMA-17,alberti}. 

\section{Construction of elliptic operator}
\label{sec:ell}
In lines with \cite{kuchment,rotmes}, we show that the problem mapping the unknown coefficients to the (also unknown) ultrasound sources $\delta H_{i,j}$ is described by an elliptic pseudo-differential operator for an appropriate choice of illuminations and for smooth coefficients and solutions of \eqref{diffusion}. 
\subsection{Parametrix for $\delta H_{i,j}$}
\label{linear}
Let the situation be as described in Sections \ref{LPBC}-\ref{MR}. Denote $\delta u_{i,j} = u_{i,j} - \bar{u}_{i,j}.$
Subtracting the equations \eqref{diffusion2} for $u_{i,j}$ and $\bar{u}_{i,j}$, we obtain an equation for $\delta u_{i,j}$ in terms of $\delta D$ and $\delta\sigma$: 

\begin{equation}
    \begin{split}
        -\nabla \cdot D \nabla \delta u_{i,j} + \sigma\delta u_{i,j} &= \nabla \cdot \delta D \nabla \bar{u}_{i,j} - \delta \sigma \bar{u}_{i,j} \quad \text{in } \Omega, \\
        \delta u_{i,j} &= 0 \quad \text{on } \partial \Omega.
    \end{split}
    \label{lin}
\end{equation}

We define the operators 
\begin{align*}
P_l(x,\partial) &= - \nabla \cdot D(x)\nabla + \sigma(x), \\    
P_r^{i,j}(x,\partial) &= \nabla \bar{u}_{i,j}(x) \cdot \nabla + \Delta\bar{u}_{i,j}(x),
\end{align*}
with symbols
\begin{align*}
p_l(x,\xi) &=  D(x)\xi^2 - i\xi \cdot \nabla D(x) + \sigma(x) \in S^2(\Omega,\mathbb{R}^n), \\ p_r^{i,j}(x,\xi) &= -i\nabla \bar{u}_{i,j}(x) \cdot \xi + \Delta\bar{u}_{i,j} \in S^1(\Omega,\mathbb{R}^n). 
\end{align*}
Recall that a function $p(x,\xi) \in C^\infty(\Omega \times \mathbb{R}^d)$ is said to be a symbol of class $S^m(\Omega \times \mathbb{R}^d)$, $m \in \mathbb{R}$, if for any multi-indices $\alpha,\beta \in \mathbb{N}^d$, there is a constant $C_{\alpha, \beta, \Omega, }$ such that 
\begin{equation*} 
|D_x^\alpha D_\xi^\beta p(x,\xi) | \leq C_{\alpha,\beta,\Omega}(1+|\xi|)^{m -|\beta|}, \quad \forall(x,\xi) \in \Omega \times \mathbb{R}^d.
\end{equation*}
In particular, the symbols $p_l$ and $p_r^{i,j}$ are homogeneous in $\xi$, and their principal symbols are $p_{l,0}(x,\xi) = D(x)\xi^2$ and $p_{r,0}^{i,j}(x,\xi) = -i\nabla \bar{u}_{i,j} \cdot \xi$. Since $D(x)$ is positive on $\Omega$, $P_l$ is elliptic and  $p_{l,0}(x,\xi) \sim \xi^2$. On the other hand, $p_{r,0}^{i,j}(x,\xi) $ is not elliptic at points $(x_0,\xi_0)$ such that $\nabla \bar{u}_{i,j}(x_0) \cdot \xi_0 = 0$. 
Let $Q$ be the parametrix (an inverse modulo smooth terms) of $P_l$,  with symbol $q$ and principal term $q_0$. Since $D(x),\sigma(x) > 0$ in $\Omega$, we have that $q(x,\xi) = \frac{1}{\xi^2+1} \text{ mod }S^{-3}(\Omega,\mathbb{R}^n)$, meaning that $q - \frac{1}{\xi^2+1} \in S^{-3}(\Omega,\mathbb{R}^d).$
Now we can solve \eqref{lin} for $\delta u_{i,j}$, modulo smooth terms: 
\begin{equation*}
    \delta u_{i,j} = QP_r^{i,j} \delta D - Q \bar{u}_{i,j}\delta\sigma. 
\end{equation*}
The symbol $r^{i,j}$ of the composition $QP_r^{i,j}$ has the asymptotic expansion 
$$ r^{i,j} \sim \sum_{\alpha \in \mathbb{N}_0^d} \frac{1}{\alpha!}\partial_\xi^\alpha q D_x^\alpha p_r^{i,j}, \quad \text{where } D_x^{\alpha}=(-i)^{|\alpha|}\partial_{x}^{\alpha},$$
and hence the principal part is 
\begin{equation*}
    r_0^{i,j}(x,\xi) = \frac{-i\nabla \bar{u}_{i,j}(x) \cdot \xi}{\xi^2+1} \text{ mod } S^{-2}(\Omega,\mathbb{R}^n). 
\end{equation*}

Next, consider $\delta H_{i,j} = H_{i,j} - \bar{H}_{i,j} = \sigma u_{i,j} - \bar{\sigma}\bar{u}_{i,j} $. Rearranging, we have
$$ \delta H_{i,j} = \sigma \delta u_{i,j} + \delta \sigma \bar{u}_{i,j}, $$
and hence we can write $\delta H_{i,j}$ as the result of a pseudo-differential operator acting on $\delta D$ and $\delta \sigma$, i.e,  
$$ \delta H_{i,j} = \sigma (QP_r^{i,j} \delta D - Q \bar{u}_{i,j} \delta \sigma) + \bar{u}_{i,j} \delta \sigma,$$
modulo smooth terms. We define  
\begin{equation}
\mathcal{H}^{i,j}_D = \sigma Q P_r^{i,j},  \quad  \quad \mathcal{H}^{i,j}_\sigma = \bar{u}_{i,j}(I - Q \sigma), \quad \text{and} \quad \mathcal{H}^{i,j} = [\mathcal{H}^{i,j}_D,\mathcal{H}^{i,j}_\sigma]. \label{Hs}
\end{equation}

In terms of \eqref{Hs} we have
\begin{equation*}
    \delta H_{i,j} = \mathcal{H}^{i,j}\begin{bmatrix}
    \delta D \\ \delta \sigma 
    \end{bmatrix} = \mathcal{H}^{i,j}_D \delta D + \mathcal{H}^{i,j}_\sigma \delta \sigma  + S_{i,j}\delta p, 
\end{equation*}
where $S_{i,j}$ is a linear operator with smooth Schwartz kernel
and $\delta p := (\delta D,\delta\sigma)$.
Let us finally note that the principal symbols of $\mathcal{H}^{i,j}_D$ and $\mathcal{H}^{i,j}_\sigma$ are 
$h^{i,j}_{D,0}(x,\xi) = r_0^{i,j}(x,\xi)$ and $h_{\sigma,0}^{i,j}(x,\xi) = \bar{u}_{i,j}(x)$, respectively. 
\subsection{Coupling with wave propagation}
\label{coupling}
From Section \ref{SP} and \eqref{measurement1}, the measurement $V_{i,j}$ is given by 
\begin{equation*}
    V_{i,j} = \chi_i (2 \pi)^{-d} \sum_{\tau = \pm}\int \int \mathrm{e}^{i\phi_\tau(t,x,\xi)-i\xi \cdot y
    }a_\tau(t,x,\xi)H_{i,j}(y) \mathrm{d}y\mathrm{d}\xi + \chi_i R H_{i,j}.
\end{equation*}
Letting $V_{i,j}$ and $\bar{V}_{i,j}$ be solutions to \eqref{wave2} with initial conditions $H_{i,j}$ and $\bar{H}_{i,j}$ respectively, we set $\delta V_{i,j} = V_{i,j} -\bar{V}_{i,j}$. Due to the linearity of \eqref{wave2}, we have 
\begin{align*}
    \delta V_{i,j} &= \chi_i (2 \pi)^{-d} \sum_{\tau = \pm}\int \int \mathrm{e}^{i\phi_\tau(t,x,\xi)-i\xi \cdot y
    }a_\tau(t,x,\xi)\delta H_{i,j}(y) \mathrm{d}y\mathrm{d}\xi + \chi_i R \delta H_{i,j}\\
    &= \chi_i (2 \pi)^{-d} \sum_{\tau = \pm}\int \int \mathrm{e}^{i\phi_\tau(t,x,\xi)-i\xi \cdot y
    }a_\tau(t,x,\xi)\left(\mathcal{H}^{i,j}_D \delta D + \mathcal{H}^{i,j}_\sigma \delta \sigma \right) \mathrm{d}y\mathrm{d}\xi + \chi_i R S_{i,j} \delta p. \nonumber
\end{align*}
The composition of an FIO with a PDO is well defined: from (\cite{FIO}, Theorem 4.2), the resulting operator is again a FIO with the same phase function and with a amplitude function with asymptotic expansion given by 
\begin{equation}
    c_\tau^{i,j}(t,x,y,\xi) \sim \sum_{\alpha \in \mathbb{N}^d_0} \frac{i^{-|\alpha|}}{\alpha!}\partial_y^\alpha
    \left(a_{\tau}(t,x,\xi) \partial_\xi^\alpha(h_D^{i,j}(y,\xi) + h_\sigma^{i,j}(y,\xi))\right), \quad \tau = \pm.
    \label{amplitude}
\end{equation}
From \cite{uhlmann}, we know that $a_\tau$ is a zeroth order amplitude with $a_{\tau,0}=\frac{1}{2}$. Hence the principal term of \eqref{amplitude}
is $\frac{1}{2}(h_{D,0}^{i,j}(y,\xi) + h_{\sigma,0}^{i,j}(y,\xi)) \text{ mod } S^{-2}(\Omega \times \mathbb{R}^d), \tau = \pm.$ \\
The forward map now takes the form
\begin{equation}
    \delta V_{i,j} = \chi_i \Lambda \mathcal{H}^{i,j} \begin{bmatrix}
    \delta D \\ \delta \sigma
    \end{bmatrix} =\chi_i (2 \pi)^{-d} \sum_{\tau = \pm}\int \int \mathrm{e}^{i\phi_\tau(t,x,\xi)-i\xi \cdot y
    }c_\tau^{i,j}(t,x,y,\xi)\begin{bmatrix}
    \delta D \\ \delta \sigma
    \end{bmatrix} \mathrm{d}y\mathrm{d}\xi, 
    \label{FIOcomp}
\end{equation}
modulo a smooth term $Q_{i,j}\delta p$ involving a linear operator $Q_{i,j}$ with smooth Schwartz kernel.
\subsection{Time-reversal}
To bring us back from boundary measurements to objects (such as $\delta p$) that are defined on the spatial domain $\Omega$, we  apply the time-reversal inversion to the composed operator in \eqref{FIOcomp}. The time reversal operator is an approximate inverse to $\Lambda$, which we denote by $A$ \cite{uhlmann,timereversal,timereversal2}. For given data $h \in  H^1([0,T]\times \partial \Omega_M)$, let $w$ be the solution of 
\begin{equation*}
    \begin{split}
        &(\partial_t^2 - c(x)^2\Delta) w(t,x) = 0, \quad \text{in }(0,\infty)\times\mathbb{R}^n,\\
        &w(t,x) = h, \quad (t,x) \in [0,T]\times \partial \Omega_M, \\
        &w(T,x) = \phi, \\
        &\partial_t w(T,x) = 0, 
        \end{split}
\end{equation*}
where $\phi$ solves $\Delta \phi = 0, \phi|_{\partial \Omega_M} = h(T,\cdot)$. 
Then $A h = w\big|_{t=0,x\in \Omega}$. For $f \in C^\infty_0(\Omega)$, we have that $f - A\Lambda f = Kf$, where $K$ is a compact operator with $\|K\|<1$. See \cite{uhlmann} for details.  \\
For a fixed measurement position $i$ and illuminations $g_{i,j}$, we now consider the system 
\begin{equation}
    A\begin{bmatrix}
    \delta V_{i,1}  \\ \vdots \\ \delta V_{i,N} 
    \end{bmatrix} = A\begin{bmatrix}
    \chi_i \Lambda \mathcal{H}^{i,1} \\ \vdots \\ \chi_i\Lambda \mathcal{H}^{i,N}
    \end{bmatrix}\begin{bmatrix}
    \delta D \\ \delta \sigma 
    \end{bmatrix} +AQ_i\delta p, 
    \label{FIOsys}
\end{equation}
where we stacked the $\delta V_{i,j}$ and applied the time reversal operator to each side. We now define 
$$ \kappa_{i}  = A\begin{bmatrix}
    \chi_i \Lambda \mathcal{H}^{i,1} \\ \vdots \\ \chi_i\Lambda \mathcal{H}^{i,N}
    \end{bmatrix}.$$ For a more compact notation in what follows, we write 
$$ \tilde{\chi}_i(x,\xi)=\frac{1}{2}\big(\chi_i(\tau_+(x,\xi),\gamma_{x,\xi}(\tau_+(x,\xi))) + \chi_i(\tau_-(x,\xi),\gamma_{x,\xi}(\tau_-(x,\xi)))\big).$$ We will need the following lemma. 
\begin{lemma}
The operator $\kappa_{i}$ is a pseudo-differential operator with principal symbol $\kappa_{i}^0$ given by 
\begin{equation}
\kappa_{i}^0(x,\xi)= 
\begin{bmatrix}
\tilde{\chi}_i(x,\xi)\frac{-i\nabla \bar{u}_{i,1}(x) \cdot \xi}{\xi^2+1} & \quad \quad  \tilde{\chi}_i(x,\xi) \bar{u}_{i,1}(x) \\
\vdots & \quad \vdots \\
\tilde{\chi}_i(x,\xi)\frac{-i\nabla \bar{u}_{i,N}(x) \cdot \xi}{\xi^2+1} &\quad  \tilde{\chi}_i(x,\xi) \bar{u}_{i,N}(x)
\end{bmatrix}.
\label{compsym}
\end{equation}
\label{symbol}
\end{lemma}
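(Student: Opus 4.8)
The plan is to read $\kappa_i$ row by row: the $j$-th block is the composition $A\,\chi_i\,\Lambda\,\mathcal{H}^{i,j}$ acting on $(\delta D,\delta\sigma)$, and I would show that each such composition is a pseudo-differential operator whose principal symbol is the $j$-th row of \eqref{compsym}. First I would recall from \eqref{FIOcomp} that $\chi_i\Lambda\mathcal{H}^{i,j}$ is already a Fourier integral operator: it carries the phase $\phi_\tau$ of $\Lambda$ (summed over the two branches $\tau=\pm$), and the cutoff $\chi_i(t,x)$ simply enters its amplitude, whose principal part is $\tfrac12(h^{i,j}_{D,0}+h^{i,j}_{\sigma,0})$ by \eqref{amplitude}. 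Thus $\kappa_i$ is, microlocally, a finite sum over $\tau=\pm$ of FIOs precomposed with the time-reversal operator $A$.

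The key structural observation is that $A$ is an approximate inverse to $\Lambda$: microlocally, $A$ is an FIO whose canonical relation is the inverse of (each branch of) that of $\Lambda$, so the canonical relation of $A\circ(\chi_i\Lambda)$ is the diagonal in $T^*\Omega\times T^*\Omega$. Consequently $A\chi_i\Lambda$ is a pseudo-differential operator, and precomposing with the PDO $\mathcal{H}^{i,j}$ keeps it a PDO; the smoothing remainder ($Q_{i,j}$ and the kernel of $K$) does not affect the symbol. This settles the qualitative claim for every entry, hence for the stacked operator $\kappa_i$.

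To compute the principal symbol I would use the identity $A\Lambda=I-K$ with $K$ compact and $\|K\|<1$, from \cite{uhlmann}, together with $a_{\tau,0}=\tfrac12$. Tracking a single singularity $(x,\xi)\in T^*\Omega\setminus 0$, the two branches of $\Lambda$ send it to the boundary exit points $(\tau_\pm(x,\xi),\gamma_{x,\xi}(\tau_\pm(x,\xi)))$, where the cutoff takes the values $\chi_i(\tau_\pm(x,\xi),\gamma_{x,\xi}(\tau_\pm(x,\xi)))$. Since $A$ reconstructs the singularity by summing the two branches, each contributing with amplitude weight $\tfrac12$, the net weight attached to $A\chi_i\Lambda$ on the visible wavefront set is exactly $\tilde\chi_i(x,\xi)$. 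Multiplying this scalar by the principal symbol $\bigl[h^{i,j}_{D,0},\,h^{i,j}_{\sigma,0}\bigr]=\bigl[\tfrac{-i\nabla\bar u_{i,j}\cdot\xi}{\xi^2+1},\,\bar u_{i,j}\bigr]$ of $\mathcal{H}^{i,j}$ and stacking over $j=1,\dots,N$ produces precisely the matrix \eqref{compsym}.

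The main obstacle is this last step: rigorously showing that the boundary cutoff $\chi_i(t,x)$ is converted, under the composition $A\chi_i\Lambda$, into the interior phase-space multiplier $\tilde\chi_i(x,\xi)$. This rests on a clean composition of canonical relations (guaranteed by the non-trapping hypothesis and the visibility conditions on $\mathcal{G}$ from Section \ref{sec:dm}, which ensure transversality and that each $(x,\xi)$ reaches $\Gamma$), on correct bookkeeping of the two bicharacteristic branches, and on the $\tfrac12$ normalization of $a_{\tau,0}$ — it is precisely this averaging that forces the factor $\tfrac12$ and the sum over $\tau=\pm$ in the definition of $\tilde\chi_i$. Everything below the principal order contributes only to lower-order symbols and to smoothing remainders, and may be tracked by the standard asymptotic expansion of the FIO--PDO composition as in \eqref{amplitude}.
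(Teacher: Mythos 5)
Your proposal is correct and follows essentially the same route as the paper: the paper simply cites Theorem 3 of \cite{uhlmann} for the fact that $A(\chi_i\Lambda)$ is a zeroth-order pseudo-differential operator with principal symbol $\tilde\chi_i(x,\xi)$, and then composes on the right with the PDO $\mathcal{H}^{i,j}$ exactly as you do. Your unpacking of the cited theorem (canonical relations inverting the two branches $\tau=\pm$, the exit points $(\tau_\pm,\gamma_{x,\xi}(\tau_\pm))$, and the $\tfrac12$ amplitude normalization producing $\tilde\chi_i$) is a faithful sketch of its proof rather than a new argument.
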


\begin{proof}
It follows the linearity of $A$ and Theorem 3 in \cite{uhlmann} that 
$ A\left( \chi_i \Lambda\right) $ is a pseudo-differential operator of order zero with principal symbol $ \tilde{\chi}_i(x,\xi)$.
Since $A(\Lambda \mathcal{H}^{i,j}) =(A\Lambda) \mathcal{H}^{i,j}$, we have that 
$A\left( \chi_i \Lambda \mathcal{H}^{i,j}\right)$ is again a pseudo-differential operator with symbol $\tilde{\chi}_i(x,\xi)(h^{i,j}_D(x,\xi) + h^{i,j}_\sigma(x,\xi))$. Considering the expressions for 
$h^{i,j}_D(x,\xi)$ and $h^{i,j}_\sigma(x,\xi)$ found in Section \ref{linear}, equation \eqref{compsym} follows. 
\end{proof}    

By applying the adjoint of $\kappa_i$, $\kappa_i^*$ to \eqref{FIOsys} and summing over $i$ we get the system
\begin{equation}
\delta \mathcal{V}
    = \sum_{i=1}^M \kappa_i^*\kappa_i\begin{bmatrix}
    \delta D \\ \delta \sigma 
    \end{bmatrix} +P\delta p,  
    \label{FIOsys2}
\end{equation}
where 
\begin{equation}
    \delta \mathcal{V} = \sum_{i=1}^M \kappa_i^*A\begin{bmatrix}
    \delta V_{i,1}  \\ \vdots \\ \delta V_{i,N} 
    \end{bmatrix} \quad \text{and } \quad P = \sum_{i=1}^M \kappa_i^*AQ_i.
    \label{dV}
\end{equation} 
Now $\delta \mathcal{V}$ contains all the measured data, and the linear operator $P$ has a smooth Schwartz kernel.
\newline 
    
We thus have replaced our inversion problem by the analysis of equation \eqref{FIOsys2}. 

\subsection{Analysis of the system of pseudo-differential operators}
We now investigate the system 
\begin{equation}
      \sum_{i=1}^M \kappa_i^*\kappa_i\begin{bmatrix}
    \delta D \\ \delta \sigma 
    \end{bmatrix} +P\delta p =  \delta \mathcal{V}.
    \label{osys}
\end{equation}
Such a system is called elliptic when the principal symbol $\sum_{i=1}^M (\kappa_i^0)^*\kappa_i^0$  of the operator $\sum_{i=1}^M \kappa_i^*\kappa_i$ is full rank for all $(x,\xi)\in T^*\Omega $, \cite{hormander,agranovich}. We show that there exist suitable sets of illuminations such that this is the case. \\
\newline 
Since each of the terms in $\sum_{i=1}^M (\kappa_i^0)^*\kappa_i^0$ is positive semi-definite, the sum will also be positive semi-definite. Hence it suffices to show that for each $(x,\xi) \in T^*\Omega$, at least one term is positive definite, as this will then guarantee the positive definiteness and full rank property of the sum. 
Since $(\kappa_i^0)^*\kappa_i^0$ is positive when $\kappa_i^0$ is full rank, we aim to find illuminations such that for every $(x,\xi) \in T^*\Omega$, there is some $\kappa_i^0$ with full rank. 
Recall that

\begin{equation}
\kappa_{i}^0(x,\xi)= 
\begin{bmatrix}
\tilde{\chi}_i(x,\xi)\frac{-i\nabla \bar{u}_{i,1}(x) \cdot \xi}{\xi^2+1} & \quad \quad  \tilde{\chi}_i(x,\xi) \bar{u}_{i,1}(x) \\
\vdots & \quad \vdots \\
\tilde{\chi}_i(x,\xi)\frac{-i\nabla \bar{u}_{i,N}(x) \cdot \xi}{\xi^2+1} &\quad  \tilde{\chi}_i(x,\xi) \bar{u}_{i,N}(x)
\end{bmatrix}.
\label{compsym2}
\end{equation}
\newline
From the assumptions on the non-trapping wave speed and sufficiently long measurement time, we know that for each  $(x,\xi) \in T^*\Omega$, at least one $\tilde{\chi}_i(x,\xi) > 0$. For $(x,\xi) \in T^*\Omega$ and such a rotation index $i=i(x,\xi)$, we need to show that $\kappa_{i}^0(x,\xi)$ is full rank.
\newline 
We denote the determinant of $2\times 2$ matrix consisting of rows $m$ and $n$ of \eqref{compsym2} by 
$q_{m,n}(x,\xi)$. Then 
\begin{equation}
    q_{m,n}(x,\xi) = \tilde{\chi}_i^2(x,\xi)\frac{i}{1 + \xi^2}\xi \cdot (\nabla \bar{u}_{i,m} \bar{u}_{i,n} - \nabla \bar{u}_{i,n} \bar{u}_{i,m}).
    \label{q}
\end{equation}
For $\kappa_i^0(x,\xi)$ to be full-rank, we thus require that there is at least one $q_{m,n}(x,\xi) \neq 0$ for every $(x,\xi) \in \Omega \times S^{d-1}$.  Here $S^{d-1}$ denotes the unit sphere in $\mathbb{R}^d$, and replaces $\mathbb{R}^d\setminus\{0\}$ since only the direction of $\xi$ is of importance. 
\newline 
Hence we seek to find illuminations $\{g_{i,j} \}$ such that for some indices $(m,n)$ it holds that  
\begin{equation}
    \tilde{q}_{m,n}(x,\xi) = \xi \cdot (\nabla \bar{u}_{i,m} \bar{u}_{i,n} - \nabla \bar{u}_{i,n} \bar{u}_{i,m}) \neq 0, \quad (x,\xi) \in \Omega \times S^{d-1}, \quad \forall 1 \leq i \leq M.
    \label{qt}
\end{equation}

Expressions similar to \eqref{qt} appears in the literature on PAT, e.g., in \cite{bal2010inverse,bal2011multi,kuchment,alberti}. \\
\newline 
For \eqref{qt} to hold, we need that the vector fields  
\begin{equation*}
    v_{m,n}(x) = (\nabla \bar{u}_{i,m} \bar{u}_{i,n} - \nabla \bar{u}_{i,n} \bar{u}_{i,m})(x)
\end{equation*}
constitute a basis for $\mathbb{R}^d$ for each $x \in \Omega$. In that way, a direction $\xi_0 \in S^{d-1}$ can never be orthogonal to all $v_{m,n}$.  It is therefore clear that we must have at least $d+1$ different illuminations $\{g_{i,j}\}_{j=1}^{d+1}$. We will use complex geometric optics (CGO) solutions to show that there exists a set of $2d$ illuminations such that the vector fields $v_{m,n}$ does indeed form a basis for $\mathbb{R}^d$ for each $x \in \Omega$,  and also show that under certain restrictions on the optical coefficients, the same boundary conditions will work for all rotations.
We briefly introduce the CGO solutions first.
\newline 
\newline 
By the Liouville transformation $v = \sqrt{D}u$, equation \eqref{diffusion2} is written as
\begin{equation}
    \begin{split}
    -\Delta v_{i,j} + q v_{i,j} &= 0, \quad \text{in } \Omega, \\
    v_{i,j} &= \tilde{g}_{i,j}, \quad \text{on } \partial\Omega, 
    \end{split}
    \label{schrodinger}
\end{equation}
where $q(x) = \Delta \sqrt{D}/\sqrt{D} + \sigma/D$. The CGO solutions are special solutions to \eqref{schrodinger} that are perturbations of complex plane waves $\mathrm{e}^{\rho \cdot x}$ of the form
$$ v_\rho (x) = \mathrm{e}^{\rho \cdot x}(1 + \psi_\rho (x)),$$
where $ \rho \in \mathbb{C}^d$ and $\rho \cdot \rho = 0$. 
For $|\rho|$ large enough, such solutions exist, and from (\cite{bal2010inverse}, Corollary 3.2), it is known that the perturbation term $\psi_\rho$ satisfies the bound 
\begin{equation}
    \|\psi_\rho \|_{H^{s}(\Omega)} \leq C(\Omega) \frac{\|q \|_{H^{s}(\Omega)} }{|\rho|},
    \label{qb}
\end{equation}
where $s > d/2 + k$ for $k \geq 1$. By choosing the parameter $\rho$ in a certain way, we can tailor these solutions to achieve solutions with the right properties. 
We present the result for the situation when $d=3$.
\begin{theorem}
\label{cgo}
For any rotation $i$ there is a open set of illuminations $\{g_{i,j}\}_{j=1}^6$ such that for each $x\in \Omega$ 
the vector fields $$ v_{n,m}(x) = (\nabla u_{i,m} u_{i,n} - \nabla u_{i,n} u_{i,m})(x), \quad 1 \leq n, m \leq 6,$$
form a basis for $\mathbb{R}^3$. \\
\newline
In addition, if $D$ is constant in a neighborhood of the boundary, there exists a $\delta$ such that if $\|q\|_{H^{d/2 + k + \varepsilon}} \leq \delta$, 
then the same set of illuminations work for all rotations.

\end{theorem}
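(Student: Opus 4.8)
The plan is to pass to the Schr\"odinger form \eqref{schrodinger} and build the six required solutions from complex geometric optics (CGO) solutions, exploiting that the leading-order plane waves transform covariantly under rotation. First I would eliminate $D$ from the Wronskians: writing $u=v/\sqrt D$ and expanding, the $\nabla\sqrt D$ contributions cancel and one gets
\[ \nabla u_{i,m}\,u_{i,n}-\nabla u_{i,n}\,u_{i,m}=\tfrac1D\big(\nabla v_{i,m}\,v_{i,n}-\nabla v_{i,n}\,v_{i,m}\big). \]
Since $D>0$ on $\Omega$, the vector fields built from the $u_{i,j}$ form a basis at $x$ if and only if those built from the $v_{i,j}$ do, so it suffices to exhibit six solutions of \eqref{schrodinger} whose Wronskians span $\mathbb R^3$ at every $x\in\Omega$.

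Next I would take three CGO solutions $v_{\rho_k}=e^{\rho_k\cdot x}(1+\psi_{\rho_k})$, $k=1,2,3$, with $\rho_k=s(\hat a_k+i\hat b_k)$, where $\hat a_k\perp\hat b_k$ are unit vectors (so $\rho_k\cdot\rho_k=0$) and $\{\hat b_1,\hat b_2,\hat b_3\}$ is a basis of $\mathbb R^3$, e.g. $\hat b_k=e_k$. The six illuminations are the real and imaginary parts $\operatorname{Re}v_{\rho_k},\operatorname{Im}v_{\rho_k}$, which are genuine real solutions since $q$ is real; this matches the count $2d=6$. Writing $v_{\rho_k}=\operatorname{Re}v_{\rho_k}+i\operatorname{Im}v_{\rho_k}$, the Wronskian of the two parts equals $-\operatorname{Im}(\overline{v_{\rho_k}}\,\nabla v_{\rho_k})$, whose leading term is $-\,s\,e^{2s\,\hat a_k\cdot x}\,\hat b_k$. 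These three vectors are positive multiples of the basis $\{\hat b_k\}$ and hence span $\mathbb R^3$, with determinant of order $s^3\prod_k e^{2s\,\hat a_k\cdot x}$, bounded below on the compact set $\overline\Omega$.

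To pass from leading order to the actual solutions I would use the bound \eqref{qb}: since $H^{d/2+k+\varepsilon}\hookrightarrow C^1$ for $k\ge1$, one has $\|\psi_{\rho_k}\|_{C^1(\overline\Omega)}\le C\|q\|/s$, so each Wronskian equals its leading term plus an error of size $O(\|q\|)\,e^{2s\hat a_k\cdot x}$ against a leading term of size $s\,e^{2s\hat a_k\cdot x}$, i.e. a relative error $O(\|q\|/s)$. Choosing $s$ large enough for the given coefficients keeps the $3\times3$ determinant bounded away from zero uniformly on $\overline\Omega$, proving the first assertion. That the admissible illuminations form an open set is then immediate: the Wronskians depend continuously, in $C^1(\overline\Omega)$, on the boundary data (by elliptic regularity and continuous dependence for \eqref{schrodinger}), and a strict positive lower bound on a determinant is an open condition.

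For the rotation-uniform claim the hypothesis that $D$ is constant near $\partial\Omega$ is used so that the Liouville transform $v=\sqrt D\,u$ is, on $\partial\Omega$, multiplication by a constant; rotating the base illuminations then corresponds exactly to rotating the Schr\"odinger traces, so for rotation $i$ the boundary data are $v_{\rho_k}(R_i^{-1}\cdot)|_{\partial\Omega}$. Because $(R_i\rho_k)\cdot(R_i\rho_k)=\rho_k\cdot\rho_k=0$, each $e^{(R_i\rho_k)\cdot x}$ is harmonic, and I would compare the corresponding solution with it: the difference solves $(-\Delta+q)w=-q\,e^{(R_i\rho_k)\cdot x}$ with boundary data of size $O(\|q\|/s)$, hence is $O(\|q\|)$ in $C^1(\overline\Omega)$ uniformly in $i$, since $|R_i\rho_k|=|\rho_k|=s\sqrt2$ makes the CGO estimate rotation-independent. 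The leading Wronskians become $-s\,e^{2s(R_i\hat a_k)\cdot x}\,R_i\hat b_k$, with determinant $s^3\prod_k e^{2s(R_i\hat a_k)\cdot x}$ because $\det R_i=1$, bounded below uniformly in $i$ and $x$. Hence for $\|q\|_{H^{d/2+k+\varepsilon}}\le\delta$ with $\delta$ small relative to the fixed $s$, full rank persists for every rotation. I expect this uniformity in $i$ to be the crux: it rests on the rotational covariance of the harmonic leading order together with a CGO bound that does not deteriorate under rotation, which is precisely why both the constant-$D$ boundary condition and the global smallness of $q$ are needed.
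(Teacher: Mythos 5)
Your argument is correct, but the core spanning step goes by a genuinely different route than the paper's. The paper fixes the specific parameters $\rho_1=t(\mathbf{e}_2+i\mathbf{e}_1)$, $\rho_2=t(\mathbf{e}_3+i\mathbf{e}_1)$, $\rho_3=t(-\mathbf{e}_2+i\mathbf{e}_1)$, shows that the gradient determinants $\det[\nabla u_{i,1},\nabla u_{i,2},\nabla u_{i,3}]$ and $\det[\nabla u_{i,1},\nabla u_{i,2},\nabla u_{i,4}]$ have leading factors $\cos(t\mathbf{e}_1\cdot x)$ and $\sin(t\mathbf{e}_1\cdot x)$ with disjoint zero sets, splits $\Omega=\Omega_1\cup\Omega_2$ accordingly, checks that $u_{i,5}$ (resp.\ $u_{i,6}$) is nonvanishing on $\Omega_1$ (resp.\ $\Omega_2$), and assembles the basis from the cross-pairs $v_{5,1},v_{5,2},v_{5,3},v_{6,1},v_{6,2},v_{6,4}$, citing \cite{bal2011densities}. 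You instead pair the real and imaginary parts of each single CGO solution and use the identity $\nabla(\operatorname{Re}v)\operatorname{Im}v-\nabla(\operatorname{Im}v)\operatorname{Re}v=-\operatorname{Im}(\bar v\,\nabla v)$, whose leading term $-s\,e^{2s\hat a_k\cdot x}\hat b_k$ is \emph{nowhere} vanishing; three such conjugate-pair Wronskians with $\{\hat b_k\}$ a basis then span $\mathbb{R}^3$ at every point with no case analysis, no complementary zero sets, and a determinant bounded below on $\overline\Omega$ by an explicit exponential. This is cleaner and more symmetric (it also makes the rotational covariance in the second claim transparent, since the leading Wronskians simply rotate to $-s\,e^{2s(R_i\hat a_k)\cdot x}R_i\hat b_k$), and your explicit observation that the Liouville transform scales all Wronskians by $D^{-1}$ tidies up a point the paper leaves implicit. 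What the paper's construction buys in exchange is compatibility with the cited machinery of \cite{bal2011densities}, where nonvanishing gradient determinants (rather than conjugate-pair Wronskians) are the basic objects. Your error control is if anything conservative: since $\|\psi_\rho\|_{C^1}\lesssim\|q\|_{H^s}/|\rho|$, the relative error in the Wronskian is $O(\|q\|/s^2)$ rather than $O(\|q\|/s)$, which only helps. The openness and rotation-uniformity arguments are essentially the same as the paper's (perturbation bound \eqref{qb} plus smallness of $\|q\|_{H^s}$), so no gap there.
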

By open set, we mean  $\{\tilde{g}_{i,j}\}$ sufficiently close to $\{g_{i,j}\}$ in any topology of sufficiently smooth functions such as $C^2(\bar\Omega)$. 
The proof combines ideas from \cite{alberti} and \cite{bal2011densities}. 
\begin{proof}
We choose CGO-parameters $\rho_1 = t (\mathbf{e}_2 + i \mathbf{e}_1)$, $\rho_2 = t(\mathbf{e}_3 + i \mathbf{e}_1)$ and $\rho_3 = t (-\mathbf{e}_2 + i \mathbf{e}_1)$, and choose the corresponding (real-valued) solutions to be 
\begin{align*}
u_{i,1} = \text{Re}\{D^{-1/2}v_{\rho_1}(x)\}, \quad  u_{i,2} = \text{Im}\{D^{-1/2}v_{\rho_1}(x)\}, \quad u_{i,3} = \text{Re}\{D^{-1/2}v_{\rho_2}(x)\}, \\
u_{i,4} = \text{Im}\{D^{-1/2}v_{\rho_2}(x)\}, \quad  u_{i,5} = \text{Re}\{D^{-1/2}v_{\rho_3}(x)\}, \quad u_{i,6} = \text{Im}\{D^{-1/2}v_{\rho_3}(x)\}.
\end{align*}
After some algebra, it follows that 
\begin{align*}
    |\text{det}[ \nabla u_{i,1} \quad \nabla u_{i,2}  \quad \nabla u_{i,3}] (x)| &= Ct^3\mathrm{e}^{t(2\mathbf{e}_2 +\mathbf{e}_3)\cdot x}|(1+O(t^{-1}))\cos(t\mathbf{e}_1\cdot x)|, \\
    |\text{det}[ \nabla u_{i,1} \quad  \nabla u_{i,2} \quad \nabla u_{i,4}] (x)|  
    &= Ct^3\mathrm{e}^{t(2\mathbf{e}_2 +\mathbf{e}_3)\cdot x}|(1+O(t^{-1}))\sin(t\mathbf{e}_1\cdot x)|,\\
    u_{i,5}(x) &= \mathrm{e}^{-t\mathbf{e}_3 \cdot x}\cos(t \mathbf{e}_1 \cdot x)(1 + O(t^{-1})), \\
    u_{i,6}(x) &= \mathrm{e}^{-t\mathbf{e}_3 \cdot x}\sin(t \mathbf{e}_1 \cdot x)(1 + O(t^{-1})),
    \label{cgo}
\end{align*} 
where $C>0$ is independent of $t$. We take the smallest $t$ so large that $|O(t^{-1})| \leq 1/2 $ in all expressions above. Since the zero sets of the determinants are disjoint, it follows that there are complementary sets $\Omega_1,\Omega_2$ such that $\Omega = \Omega_1 \cup \Omega_2$ and that $\{\nabla u_{i,1}, \nabla u_{i,2}, \nabla u_{i,3} \}$ is a basis for $\mathbb{R}^3$ on $\Omega_1$, and $\{\nabla u_{i,1}, \nabla u_{i,2}, \nabla u_{i,4} \}$ is a basis for $\mathbb{R}^3$ on $\Omega_2$ (see \cite{bal2011densities} for details on the construction). Correspondingly, $u_{i,5}(x) \neq 0 $ for $x \in \Omega_1$ and $u_{i,6}(x) \neq 0 $ for $x \in \Omega_2$. 
As a consequence, the vector fields 
$$ v_{5,1},v_{5,2},v_{5,3},v_{6,1},v_{6,2},v_{6,4}$$
form a basis for $\mathbb{R}^3$ for every $x \in \Omega$. Hence we can choose illuminations such that $g_{i,j} = u_{i,j}|_{\partial \Omega}$. Further, by continuity of the mapping $g \in C^2(\overline{\Omega}) \mapsto u \in C^1(\overline{\Omega})$, it suffices to take $g_{i,j}$  close to  $u_{i,j}|_{\partial \Omega}$, i.e., 
$\|g_{i,j} - u_{i,j}\|_{C^2(\overline{\Omega})} \leq \varepsilon$, for some $\varepsilon > 0$ sufficiently small \cite{alberti}. \\
\newline 
Now, let $R \in \mathbb{R}^{3\times3}$ be a rotation matrix, and set $\tilde{q}(x) = q(Rx)$. Then equation \eqref{schrodinger} with $q$ replaced by $\tilde{q}$ corresponds to a rotation $i$ of the object or the illumination patterns. Let 
\begin{equation}
    \tilde{v}_{\rho_1}(x) = \mathrm{e}^{\rho_i\cdot x}(1 + \tilde{\psi}_{\rho_1}(x)), \quad \tilde{u}_{k,1} =\text{Re}\{ D^{-1/2}\tilde{v}_{\rho_1}(x)\},
\end{equation}
where $\tilde{\psi}_{\rho_i}$ is the perturbation term with respect to $\tilde{q}$. 
We assume that $\|g_{i,1} - u_{i,j}|_{\partial \Omega}\|_{C^2(\overline{\Omega})} \leq \varepsilon/2$. Then 
\begin{align*}
\|g_{i,1} - \tilde{u}_{k,1}\|_{C^2(\overline{\Omega})} &= 
\|g_{i,1} - u_{k,1}+ u_{k,1} -\tilde{u}\|_{C^2(\overline{\Omega})}\\ 
&\leq \varepsilon/2 + \| u_{k,1} -\tilde{u}_{k,1} \|_{C^2(\overline{\Omega})} \\
&= \varepsilon/2 + \|\text{Re}\{\mathrm{e}^{\rho_1 \cdot x}(\psi_{\rho_1} - \tilde{\psi}_{\rho_1})\  \|_{C^2(\overline{\Omega})}\\
&\leq \varepsilon/2 + \| \text{Re}\{\mathrm{e}^{\rho_1 \cdot x} \}\|_{C^2(\overline{\Omega})}\frac{2}{|\rho_1|}\|q \|_{H^{s}},
\end{align*}
where the last inequality follows by Sobolev embedding and the bound in \eqref{qb}, setting $s > 3/2 + 2$, and the fact that $\|q\|_{H^s} = \|\tilde{q}\|_{H^s}$.
The result now follows by requiring 
$$ \|q\|_{H^s} \leq \delta = \frac{\varepsilon |\rho_1|}{4\|\text{Re}\{\mathrm{e}^{\rho_1 \cdot x}\} \|_{C^2(\overline{\Omega})}}.$$
The same argument then works for all illuminations.

\end{proof}

The drawback with using CGO-solutions is that the results are not constructive, since they rely on the unknown parameters. Numerical simulations \cite{bal2011multi} show that most illuminations will in fact work, but we know that in dimension $d\geq3$, we cannot find illuminations that provide ellipticity conditions independently of the coefficients \cite{ABD-ARMA-17}.

\subsection{Stability and uniqueness}
Written out, the principal part of measurement object $\delta \mathcal{V}$ in \eqref{dV} is of the form
\begin{equation*}
    \delta \mathcal{V} = \begin{bmatrix}
    \delta \mathcal{V}_1 \\ \delta \mathcal{V}_2 
    \end{bmatrix} 
\end{equation*}
with components 
\begin{equation*}
    \delta \mathcal{V}_1 =\sum_{i=1}^M \sum_{j=1}^N \tilde{\chi}_i\frac{i\nabla \bar{u}_{i,j}(x) \cdot \xi}{\xi^2+1}A\delta V_{i,j} \quad \text{and } \quad \delta \mathcal{V}_2 =\sum_{i=1}^M \sum_{j=1}^N \tilde{\chi}_i\bar{u}_{i,j}A\delta V_{i,j}.
\end{equation*}
Note that for $\delta \mathcal{V}_1$ and $\delta \mathcal{V}_2$, we have by standard mapping properties of PDOs on Sobolev spaces that 
\begin{align}
    \begin{split}
    \|\delta \mathcal{V}_1 \|_{H^s} &= \left\|\sum_{i=1}^M \sum_{j=1}^N \tilde{\chi}_i\frac{i\nabla \bar{u}_{i,j}(x) \cdot \xi}{\xi^2+1}A\delta V_{i,j} \right\|_{H^s} \leq  C_s\sum_{i=1}^M \sum_{j=1}^N\left\| A\delta V_{i,j} \right\|_{H^{s-1}},\\
    \|\delta \mathcal{V}_2 \|_{H^s} &= \left\|\sum_{i=1}^M \sum_{j=1}^N \tilde{\chi}\bar{u}_{i,j}A\delta V_{i,j} \right\|_{H^s} \leq \tilde{C}_s\sum_{i=1}^M \sum_{j=1}^N\left\| A\delta V_{i,j} \right\|_{H^{s}},
    \end{split}
    \label{sobo}
\end{align}
for $s\in \mathbb{R}$ and constants $C_s,\tilde{C}_s>0$. \\
\newline 
We now associate to the system \eqref{FIOsys2} two sets of integers  $s = (s_1,s_2)$ and $t = (t_1,t_2)$ such that for each entry of the $2\times 2$ symbol  $\sum_{i=1}^M(\kappa_i)^*\kappa_i$ we have that $\left(\sum_{i=1}^M(\kappa_i)^*\kappa_i(\kappa^0_i)^*\kappa^0_i\right)_{(i,j)} \in S^{s_i - t_j}(\Omega,\mathbb{R}^n)$. Since for each $i$
\begin{equation*}
    (\kappa^0_i)^*\kappa^0_i (x,\xi) = 
    \tilde{\chi}_i^2(x,\xi)\sum_{j=1}^N\begin{bmatrix} \frac{(\nabla \bar{u}_{i,j}(x) \cdot \xi)^2}{(\xi^2+1)^2} & \quad 
   \frac{i\nabla \bar{u}_{i,j}(x) \cdot \xi \bar{u}_{i,j}}{\xi^2+1} \\
    \frac{-i\nabla \bar{u}_{i,j}(x) \cdot \xi \bar{u}_{i,j}}{\xi^2+1} & \quad  \bar{u}_{i,j}^2 
    \end{bmatrix},
\end{equation*}
we can choose and $s = (-1,0)$ and $t = (1,0)$ so that the above entry $(k,l)$ is of order $s_k-t_l$.
\newline
The operator $(\kappa^0_i)^*\kappa^0_i$ is said to be elliptic of type $(s,t)$ in the Douglis-Nirenberg sense if it is invertible for every $(x,\xi) \in T^* \Omega$, see \cite{hormander,nirenberg}. When $(\kappa^0_i)^*\kappa^0_i$ is elliptic, it has a parametrix $Q(x,\xi)$, an inverse modulo smooth terms, and applying $Q$ to \eqref{osys} yields the estimate (since $QP$ is smooth as well)
\begin{equation}
    \|\delta D \|_{L^2(\Omega)} + \|\delta \sigma \|_{H^1(\Omega)} \leq C\left(\| \delta \mathcal{V}_1 \|_{H^2(\Omega)} + \| \delta \mathcal{V}_2 \|_{H^1(\Omega)} + \|\delta D \|_{H^{s'}(\Omega)} + \|\delta \sigma \|_{H^{s'}(\Omega)} \right),
    \label{elreg}
\end{equation}
for any $s' \in \mathbb{R}$ (\cite{hormander}, Lemma 1.0.2', or \cite{kuchment}). The constant $C$ depends on $D,\bar{D},\sigma, \bar{\sigma}, \Omega$, on $s'$ and the illuminations.
We can now complete the proof of Theorem \ref{stability}.
\begin{proof}
That boundary conditions such that $(\kappa^0_i)^*\kappa^0_i$ is elliptic of the $(s,t)$ type exist follows from Theorem \ref{cgo}, and hence the estimate \eqref{elreg} holds.
Let $s'=-1$ in \eqref{elreg}. For $u \in  L^2(\Omega) \subset H^{-1}(\Omega)$, $\text{supp } u \subset B_\varepsilon$, we have
\begin{align*}
    \|u\|_{H^{-1}(\Omega)} =& \sup_{\|v\|_{H^1_0(\Omega)} \leq 1} \bigg|\int_{B_\varepsilon} u v \mathrm{d}x\bigg| 
    = \sup_{\|v\|_{H^1_0(B_\varepsilon)} \leq 1} \bigg|\int_{B_\varepsilon} u v \mathrm{d}x \bigg|\\
    \leq &  \sup_{\|v\|_{H^1_0(B_\varepsilon)} \leq 1 }  \|u\|_{L^2(B_\varepsilon)} \|v\|_{L^2(B_\varepsilon)} 
    \leq  \sup_{\|v\|_{H^1_0(B_\varepsilon)} \leq 1 } \|u\|_{L^2(B_\varepsilon)} \tilde{C} \varepsilon\|Dv\|_{L^2(B_\varepsilon)} \\
    \leq & \sup_{\|v\|_{H^1_0(B_\varepsilon)} \leq 1 }\|u\|_{L^2(B_\varepsilon)} \tilde{C} \varepsilon\|v\|_{H^1_0(B_\varepsilon)} \leq \tilde{C} \varepsilon\|u\|_{L^2(B_\varepsilon)}, 
\end{align*}
where we use the Poincar\'e inequality to get the $\varepsilon$-dependence; see \cite{evans10}. A priori, $u \in H_0^1(B_\varepsilon)$, and we have that
\begin{equation} \|u\|_{H^{-1}(\Omega)} \leq \tilde{C} \varepsilon \|u\|_{L^2(\Omega)}, \qquad \|u\|_{H^{-1}(\Omega)} \leq \tilde{C} \varepsilon \|u\|_{H^1(\Omega)}.
\label{uest}
\end{equation}
Using the estimates in \eqref{sobo} we get that that for some constant $C_v > 0$
\begin{equation}
    \|\delta \mathcal{V}_1 \|_{H^2(\Omega)} + \| \delta \mathcal{V}_2 \|_{H^1(\Omega)} \leq C_v\sum_{i=1}^M \sum_{j=1}^N \| A \delta V_{i,j} \|_{H^1(\Omega)}.
\end{equation}
Last, the time-reversal operator $A$ is bounded (\cite{lions}, Theorem 2.4 with $\theta = 1$), and we have $\|A\delta V_{i,j} \|_{H^1(\Omega)} \leq C_A\|\delta V_{i,j} \|_{H^1([0,T]\times \partial \Omega_M)}.$
Under the assumption that $\text{supp } \delta D, \text{supp } \delta \sigma \subset B_\varepsilon$, we make use of the inequalities \eqref{uest} and rearrange \eqref{elreg} to get
\begin{equation}
    (1 - C\tilde{C}\varepsilon)(\|\delta D \|_{L^2(\Omega)} + \|\delta \sigma \|_{H^1(\Omega)} )\leq C_A C_v \sum_{i=1}^M\sum_{j=1}^N \|\delta V_{i,j} \|_{H^1([0,T]\times \partial \Omega_M)}.
    \label{elreg2}
\end{equation}
We can then choose $\varepsilon$ such that $(1-C\tilde{C}\varepsilon) >0$. Recalling Theorem \ref{cgo}, we have $N = 2d$ and we get \eqref{ineq1}. 
\end{proof}


\section{Remarks and Conclusions}
\label{sec:conclu}

In a sufficiently idealized setting, where the ultrasound propagation may be modeled by a wave equation with reasonably well known sound speed (and nagging attenuation effects may be neglected \cite{kowar2012attenuation}), quantitative PAT displays favorable mathematical properties, as a composition of two reasonably well-posed inverse problems. This holds in the setting where (sufficiently) full ultrasound measurements are available, which is not always practical \cite{rotexp2,rotexp}.

We show in this paper that PAT enjoys favorable stability properties as well when a similar amount of measurement is collected for optical illuminations and ultrasound detectors that are allowed to rotate during the acquisition procedure so that different parts of the ultrasound measurements are generated by different optical illuminations. 

The absence of complete ultrasound measurement for each optical illumination renders the inversion of a single wave equation ill-posed. Only when measurements from all optical sources are accounted for can one expect stable reconstructions.  In such settings, where only local pieces of information are available for each illumination, it is difficult to envision a direct global inversion procedure. Rather, it is the ideal setting to apply micro-local methods, whose roles are precisely to propagate (phase-space-) local information through wave or elliptic equations. 

Writing the difference of sufficiently smooth nonlinear functionals ${\cal F}(u)-{\cal F}(v)$ as a general functional ${\cal G}(u,v,u-v)$ that is linear in its last component, a procedure that applies in a great variety of contexts, we can write the difference of measurements as a (standard) elliptic Fourier integral operator applied to $u-v$ with a symbol that depends on $(u,v)$. This imposes that $(u,v)$, for us here the sound speed and the optical coefficients as well as the solutions to the diffusion equation, be sufficiently smooth. This procedure has been applied for the second step of many hybrid inverse problems in \cite{kuchment}. The inversion procedure then necessarily provides conditional stability, that is to say stable reconstructions up to the possibly non-trivial kernel of a compact operator. In this paper, we chose to make a smallness assumption on the support of the coefficients of interest to show that the kernel of said operator was necessarily trivial, a trick that is certainly not new \cite{H-III-SP-94}.

These are our two main somewhat unnecessary condition: very large smoothness of coefficients and smallness of support.

Such strong smoothness assumptions are not necessary in standard PAT \cite{bal2011multi} (although no known results hold for arbitrary coefficients; for instance $D$ and $\sigma$ arbitrary measurable functions that are bounded between $1$ and $2$, say). They can also be avoided for more general hybrid inverse problems by writing the inverse problem as a coupled system of nonlinear partial differential equations for $(D,\sigma,u_j)$, where the PDO calculus of the second step of PAT is replaced by a better behaving potential theory \cite{B-CM-14}. In that setting, one can also sometimes apply a unique continuation principle that allows one to obtain an injectivity result independently of the size of the support of the unknown coefficients.

However, in our setting of rotating measurements, it seems unclear how the micro-local pursuit of propagation of singularities can be totally avoided. Within that context, unnecessary smoothness assumptions seem necessary (realizing the inherent contradiction). As we mentioned in the introduction, all the smoothness we need is for $s'$ in  \eqref{elreg} to be strictly negative. This can be achieved by Taylor expansions involving finitely many terms in the definition of PDO and FIO operators as well as in their composition. How many (a sufficiently large finite number) dictates how much smoothness our coefficients need to verify. 

These technical constraints notwithstanding, we expect standard quantitative PAT and QPAT in a rotating setting to display very similar resolution capabilities.

\section*{Acknowledgment} The work of GB was partially funded by the US National Science Foundation and the US Office of Naval Research.



\bibliographystyle{plain}
\bibliography{bib}

\end{document}